\theoremstyle{plain}
\newtheorem{definition}{Definition}
\newtheorem{example}{Example}
\newtheorem{theorem}{Theorem}
\newtheorem{proposition}{Proposition}
\newtheorem{conjecture}{Conjecture}
\newcommand{\R}{\mathbb R} %REALS
\newcommand{\bi}{\begin{itemize}}
\newcommand{\ei}{\end{itemize}}
\newcommand{\be}{\begin{enumerate}}
\newcommand{\ee}{\end{enumerate}}
\newcommand{\pd}{\partial}
\newcommand{\K}{\mathcal{K}}
\begin{document}
\title{Natural properties of the trunk of a knot}
\author{Derek Davies}
\author{Alexander Zupan}

\maketitle

\begin{abstract}
The trunk of a knot in $S^3$, defined by Makoto Ozawa, is a measure of geometric complexity similar to the bridge number or width of a knot.  We prove that for any two knots $K_1$ and $K_2$, we have $tr(K_1 \# K_2) = \max\{tr(K_1),tr(K_2)\}$, confirming a conjecture of Ozawa.   Another conjecture of Ozawa asserts that any width-minimizing embedding of a knot $K$ also minimizes the trunk of $K$.  We produce several families of probable counterexamples to this conjecture.
\end{abstract}

\section{Introduction}

The \emph{width} $w(K)$ of a knot $K$ in the 3-sphere was introduced in \cite{gabai} as crucial tool in Gabai's proof of the Property R Conjecture.  Since its inception, width has spawned an entire theory in 3-manifold topology, known as \emph{thin position} theory.  The bridge number of a knot, a precursor to width introduced by Schubert, is another well-studied and well-understood measure of the geometric complexity of a knot.  In this paper, we examine a newer invariant, Ozawa's \emph{trunk} of a knot \cite{ozawa}.  Trunk, like width and bridge number, is computed by minimizing some complexity over all possible embeddings of a knot.

One problem in this setting is to determine the behavior of a given invariant under standard operations, such as taking satellites or connected sums.  The effects of taking satellites on bridge number and width have been studied extensively; see for instance \cite{liguo,schubert,schultens,zupan2,zupan1}.  In the more specific case of connected summation, a natural question arises: if $K = K_1 \# K_2$, does the obvious embedding of $K$ obtained by stacking minimal embeddings of $K_1$ and $K_2$ minimize the complexity of $K$?  For the bridge number, $b(K)$, the answer is ``yes": following \cite{schubert, schultens}, we have $b(K_1 \# K_2) = b(K_1) + b(K_2) - 1$.  For width, the answer is ``often but not always": for many knots, $w(K_1 \# K_2) = w(K_1) + w(K_2) - 2$ \cite{riecksedg}, but there exist knots $K_1$ and $K_2$ for which $w(K_1 \# K_2)  = \max\{ w(K_1),w(K_2)\} < w(K_1) + w(K_2) - 2$ \cite{blairtom}.

In the present work, we prove that the trunk of a knot behaves as expected under taking connected sums, confirming a conjecture made by Ozawa in \cite{ozawa}.

\begin{theorem}\label{thm:trunk}
For any two knots $K_1$ and $K_2$ in $S^3$,
\[ tr(K_1 \# K_2) = \max\{ tr(K_1),tr(K_2)\}.\]
\end{theorem}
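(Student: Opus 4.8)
The plan is to prove the two inequalities $tr(K_1 \# K_2) \le \max\{tr(K_1),tr(K_2)\}$ and $tr(K_1 \# K_2) \ge \max\{tr(K_1),tr(K_2)\}$ separately. Throughout I will regard a position of a knot as a choice of height function $h\colon S^3\to\R$ whose level sets $P_t=h^{-1}(t)$ are $2$-spheres, so that $tr$ is the minimum over all positions of $\max_t |K\cap P_t|$, and I will use freely that $tr(K)\ge 2$ for every knot.

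For the upper bound I would exhibit an explicit position of $K_1 \# K_2$. Fix positions of $K_1$ and $K_2$ realizing their trunks, and choose the cut points defining the connected sum to lie at the global minimum of $K_1$ and the global maximum of $K_2$. Stacking a rescaled copy of $K_1$ above a rescaled copy of $K_2$ and joining them by two height-monotone strands across a middle band produces a position of $K_1 \# K_2$ in which every level sphere in the top region meets the knot in at most $tr(K_1)$ points, every level sphere in the bottom region in at most $tr(K_2)$ points, and every level sphere in the connecting band in exactly two points. Hence the maximal width of this position is $\max\{tr(K_1),tr(K_2)\}$. The only thing to check is that opening the two summands at their extreme points introduces no new wide levels; this holds because below the old minimum of $K_1$ (respectively above the old maximum of $K_2$) one now sees only the two new strands, and $2\le tr(K_i)$.

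The heart of the matter is the reverse inequality, for which by symmetry it suffices to show $tr(K_1 \# K_2)\ge tr(K_1)$. Start from a position of $K = K_1 \# K_2$ of minimal trunk $m$, and let $\Sigma$ be a decomposing sphere: a $2$-sphere meeting $K$ transversally in two points $a,b$, separating $S^3$ into balls $B_1,B_2$ with $\alpha_i = K\cap B_i$, and such that capping either side by a trivial arc recovers the corresponding summand. I would recover a position of $K_1$ by discarding $\alpha_2$ and capping $\alpha_1$ with a trivial arc $\beta\subset B_2$ from $a$ to $b$. The construction is driven by the pointwise width estimate: if $\beta$ can be chosen with $|P_t\cap\beta|\le |P_t\cap\alpha_2|$ for every regular value $t$, then for all such $t$,
\[
|P_t\cap K_1| = |P_t\cap\alpha_1| + |P_t\cap\beta| \le |P_t\cap\alpha_1| + |P_t\cap\alpha_2| = |P_t\cap K| \le m,
\]
whence $tr(K_1)\le m$ and we are done. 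Since $\alpha_2$ joins $a$ to $b$, a parity argument gives $|P_t\cap\alpha_2|\ge 1$ for every $t$ strictly between $h(a)$ and $h(b)$, so it is enough to route $\beta$ as a single height-monotone arc from $a$ to $b$ that stays within the height band bounded by $h(a)$ and $h(b)$; such a $\beta$ meets each level at most once inside the band and not at all outside it.

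The main obstacle is producing the decomposing sphere $\Sigma$ in good position relative to $h$—ideally one meeting each level sphere in at most a single circle, so that $\Sigma$ has exactly one maximum and one minimum and the monotone cap $\beta$ can be routed just outside $\Sigma$ (which also guarantees that $\beta$ is a trivial arc, so that $\alpha_1\cup\beta$ is genuinely $K_1$). I would obtain this by first choosing $\Sigma$ to minimize the number of critical points of $h|_\Sigma$ and then running innermost-disk and thin-position surgeries on the circles of $\Sigma\cap P_t$, exploiting that each such circle bounds disks on both sides of the spherical level $P_t$. The knot $K$, which may pierce a given level as many as $m$ times, is precisely what blocks naive simplification, and keeping track of these intersections against the two marked points $a,b$ is the delicate step; once $\Sigma$ is reduced to a vertical sphere, the routing of $\beta$ and the width estimate above finish the proof.
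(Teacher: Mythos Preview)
Your upper bound via stacking is fine and matches the paper. The gap is in the lower bound, precisely at the step you yourself flag as ``delicate'': reducing the decomposing sphere $\Sigma$ to a position with one maximum and one minimum so that a monotone trivial cap $\beta\subset B_2$ exists. The innermost-disk and thin-position surgeries you invoke do not obviously run here. Thin-position arguments use that $K$ is in \emph{thin position}, so that a high/low disk pair yields a width-reducing isotopy; you only have minimal \emph{trunk}, which gives much weaker control. Innermost circles of $\Sigma\cap P_t$ on $P_t$ typically bound disks that meet $K$, while surgering along subdisks of $\Sigma$ disconnects $\Sigma$ and can destroy the decomposing property. You have correctly located the hard step but not supplied an argument for it, and without it there is no reason $B_2$ contains any monotone arc from $a$ to $b$ at all.

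The paper bypasses this by quoting the Scharlemann--Schultens reimbedding theorem. Viewing $K_1\#K_2$ as a satellite with pattern $K_1$ inside a solid torus $V$ and companion $K_2$, that theorem produces a \emph{height-preserving} reimbedding of $V$ as an unknotted solid torus, carrying the minimal-trunk embedding $k$ of $K_1\#K_2$ to an embedding $k'$ isotopic to $K_1$ with identical level-intersection numbers; hence $tr(K_1)\le tr(k')=tr(k)=tr(K_1\#K_2)$, and symmetry finishes. Unknotting the swallow-follow torus while preserving heights is exactly the content of the step you could not carry out by hand; it is a nontrivial theorem rather than a routine innermost-disk argument, and the right move is to cite it.
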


The proof relies on a reimbedding theorem of Scharlemann and Schultens initially proved to study knot width \cite{scharschult}.

We also examine another conjecture made by Ozawa in \cite{ozawa}, which asserts that for every knot $K$, an embedding with minimal width also has minimal trunk.  We produce potential counterexamples to this conjecture, stopping short of proving that these embeddings minimize width.  We give concrete examples which are likely to satisfy the following conjecture:

\begin{conjecture}
There exists a knot $K$ with an embedding $k$ such that $k$ minimizes width but does not minimize trunk or bridge number, and there exists a knot $K'$ with an embedding $k'$ such that $k'$ minimizes width and bridge number but does not minimize trunk.
\end{conjecture}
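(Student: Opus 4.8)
The plan is to realize both $K$ and $K'$ as connected sums and to evaluate each of the three invariants on an explicit nested embedding, using Theorem~\ref{thm:trunk} to compute the minimal trunk and Schubert's additivity $b(K_1 \# K_2) = b(K_1) + b(K_2) - 1$ (see \cite{schubert, schultens}) to compute the minimal bridge number. The invariant I cannot pin down by a clean formula is the width, so I would engineer the summands so that the candidate embedding attains width $\max\{w(K_1), w(K_2)\}$, the value permitted by the width non-additivity phenomenon of Blair and Tomova \cite{blairtom}. Declaring this to be the \emph{minimal} width is the single step the argument leaves unverified, and is precisely the reason the statement is phrased as a conjecture with ``probable'' counterexamples.

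First, for the knot $K$, I would fix summands $K_1, K_2$ of Blair--Tomova type, for which the width-minimizing presentation of $K_1 \# K_2$ is not the stacked presentation of width $w(K_1)+w(K_2)-2$ but a nested presentation $k$ of width $\max\{w(K_1),w(K_2)\}$, in which the trunk-larger summand is threaded through a thin level sphere of the other. I would then read off the remaining two invariants directly from $k$. Because the nesting meets both summands along a common level, the widest level of $k$ counts strands of both $K_1$ and $K_2$ simultaneously, so the trunk of the embedding $k$ strictly exceeds $\max\{tr(K_1),tr(K_2)\}$; by Theorem~\ref{thm:trunk} this maximum equals $tr(K)$, whence $k$ does not minimize trunk. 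Similarly, threading one summand through a thin sphere of the other forces extra maxima beyond $b(K_1)+b(K_2)-1$, so $k$ does not minimize bridge number either. This yields the first half of the conjecture.

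For the knot $K'$, I would keep the same mechanism but tune the summands and the nesting so that the nested width-minimizing presentation $k'$ is in addition bridge-minimal, i.e.\ its number of maxima equals $b(K_1)+b(K_2)-1$, while its widest level still meets both summands. Then $k'$ minimizes width and bridge number, yet, exactly as before, its trunk strictly exceeds $\max\{tr(K_1),tr(K_2)\} = tr(K')$, so $k'$ fails to minimize trunk. The extra requirement here is that the nesting be performed without introducing surplus maxima, which constrains the choice of summands and is where this half of the construction becomes most delicate.

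The main obstacle is exactly the step flagged at the outset: certifying that the nested presentations $k$ and $k'$ genuinely minimize width. Width is notoriously difficult to compute and is not well behaved under elementary moves, and even the existence of width non-additive knots in \cite{blairtom} rests on subtle thin-position and sweepout arguments; for our particular summands one can realistically only argue that the nested presentation is the plausible thin position. Every other ingredient --- the minimal trunk via Theorem~\ref{thm:trunk}, the minimal bridge number via Schubert additivity, and the explicit trunk and bridge values of $k$ and $k'$ --- is exactly computable, so the conjecture would follow immediately from a proof that these two embeddings are width-minimizing.
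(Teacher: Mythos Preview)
Your approach differs substantively from the paper's. The paper does not build $K$ and $K'$ as connected sums at all; instead it works directly with the Scharlemann--Thompson/Blair--Tomova templates, producing for each choice of parameters two explicit isotopic embeddings $k_{r_1,r_2,s_1,s_2}$ and $k'_{r_1,r_2,s_1,s_2}$ of a single knot, and simply computes trunk, width, and bridge number on both from the thick/thin level data. For $K = K_{2,1,3,7}$ the embedding $k$ has smaller width but larger trunk and bridge number than $k'$; for $K' = K_{4,1,3,3}$ the embedding $k'$ has smaller width and smaller bridge number but larger trunk than $k$. No additivity theorem is invoked, and neither the true minimal trunk nor the true minimal bridge number of these knots is claimed --- only the comparison between the two embeddings. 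Your route, by contrast, leverages Theorem~\ref{thm:trunk} and Schubert additivity to nail down the genuine minima of trunk and bridge number, which is an advantage the paper's approach does not enjoy.

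The weak point of your proposal is the $K'$ construction. You need a nested embedding of $K_1 \# K_2$ that is simultaneously bridge-minimal (exactly $b(K_1)+b(K_2)-1$ maxima) and has a widest level meeting both summands, hence trunk strictly larger than $\max\{tr(K_1),tr(K_2)\}$. But the swallow-follow nestings that defeat stacked width in \cite{blairtom} work precisely by introducing extra maxima; a nested embedding achieving the Schubert bound on bridges is, morally, a stacked one, and its widest level should meet only one summand. You acknowledge this is ``most delicate'' but give no mechanism for achieving it, and it is not at all clear such an embedding exists. The paper avoids this tension entirely by abandoning the connected-sum framework and comparing two template embeddings directly, at the price of not knowing whether the lower-bridge embedding actually realizes $b(K')$. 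Both approaches, of course, leave width-minimality unverified, which is why the statement remains a conjecture.
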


\subsection{Acknowledgements}
The second author is partially supported by NSF grant DMS--1203988.

\section{Preliminaries}

We begin this section by defining the three related knot invariants discussed in the introduction, \emph{bridge number}~\cite{schubert}, \emph{width}~\cite{gabai}, and \emph{trunk}~\cite{ozawa}.  Each invariant minimizes a different measure of geometric complexity over the set of all embeddings isotopic to a particular knot.  For the remainder of the paper, we will fix a Morse function $h:S^3 \rightarrow \R$ such that $h$ has exactly two critical points, which are denoted $\pm \infty$.

\begin{definition}
Fix a knot $K$ and let $\K$ denote the set of all embeddings $k$ of $S^1$ into $S^3$ isotopic to $K$ such that $h_k$ is Morse.  For each $k \in \K$, let $c_0 < \dots < c_p$ denote the critical values of $h_k$, and choose regular values $c_0 < r_1 < c_1 < \dots < c_{p-1} < r_p < c_p$.  For each $i$, $1 \leq i \leq p$, let $x_i = |h^{-1}(r_i) \cap k|$, the width of the level set $h^{-1}(r_i)$.

The bridge number $b(k)$ of $k$ is the number of maxima (or minima) of $h_k$, the width $w(k)$ of $k$ is $w(k) = \sum x_i$, and the trunk $tr(k)$ of $k$ is $tr(k) = \max \{x_i\}$.  The invariants bridge number $b(K)$, width $w(K)$, and trunk $tr(K)$ are defined as follows:
\begin{eqnarray*}
b(K) &=& \min_{k \in \K} b(k) \\
w(K) &=& \min_{k \in \K} w(k) \\
tr(K) &=& \min_{k \in \K} tr(k).
\end{eqnarray*}
For a particular embedding $k \in \K$, we say that $h^{-1}(r_i)$ is a thick level if $x_i > x_{i-1},x_{i+1}$, and $h^{-1}(r_i)$ is a thin level if $x_i < x_{i-1},x_{i+1}$.
\end{definition}

If $k$ is an embedding of $K$ that realizing minimal width (that is, $w(k) = w(K)$), we call $k$ a \emph{thin position} of $K$.  %Similarly, if $k$ realizes minimal trunk, we say that $k$ is a \emph{trunk position} of $K$.

Next, we define the satellite construction, of which the connected sum operation is a special case.

\begin{definition}
Let $J$ be a knot in $S^3$, suppose that $V$ is an unknotted solid torus with core $C$ containing a knot $\hat{K}$ that meets every meridian disk of $V$, and let $\varphi:V \rightarrow S^3$ be a knotted embedding, where the image $\varphi(C)$ of $C$ is isotopic to $J$ in $S^3$.  Then $K = \varphi(\hat{K})$ is called a satellite knot with companion $J$ and pattern $\hat{K}$.
\end{definition}

Note that if $K = K_1 \# K_2$ for knots $K_1,K_2$ in $S^3$, then $K$ is a satellite knot with companion $K_i$ and pattern $K_j$, where $\{i,j\} = \{1,2\}$ and $K_j$ has been embedded in a solid torus so that a meridian disk meets $K_j$ in a single point.

We need one final cluster of definitions in order to state known results.

\begin{definition}
Let $K$ be a knot in $S^3$, with $N(K)$ an open regular neighborhood of $K$, and let $E(K) = S^3 \setminus N(K)$ be the exterior of $K$ in $S^3$.  A properly embedded, orientable surface $S \subset E(K)$ is meridional if the curves $\pd S \subset \overline{N(K)}$ bound meridian disks of the solid torus $\overline{N(K)}$, and $S$ is incompressible if the induced inclusion map $i_*:\pi_1(S) \rightarrow \pi_1(E(K))$ is injective.  The knot $K$ is called meridionally-planar small (or mp-small) if $E(K)$ does not contain an incompressible, planar, meridional surface $S$.
\end{definition}

\section{Behavior under connected sums}

In order to understand the behavior of bridge number, width, and trunk under the connected sum operation, we first describe a straightforward upper bound.

\begin{definition}
Let $K_1$ and $K_2$ be knots in $S^3$ with embeddings $k_1$ and $k_2$, respectively.  Let $M$ be the point on $K_1$ corresponding to the maximum of $h_{k_1}$, and let $m$ be the point on $k_2$ corresponding to the minimum of $h_{k_2}$.  We obtain an embedding $k$ of $K = K_1 \# K_2$ via the following process:  Reimbed $k_1$ and $k_2$ into $S^3$ so that $h(M) < h(m)$, and connect $M$ to $m$ with two vertical arcs, so that the resulting embedding $k$ is isotopic to $K$.  We say that $k$ is obtained by stacking $k_2$ on $k_1$.  See Figure \ref{fig:ex1}.
\end{definition}
By stacking minimal bridge positions, thin positions, or minimal trunk positions of two knots $K_1$ and $K_2$, we immediately obtain the following inequalities:
\begin{eqnarray*}
b(K_1 \# K_2) &\leq& b(K_1) + b(K_2) - 1\\
w(K_1 \# K_2) &\leq& w(K_1) + w(K_2) - 2 \\
tr(K_1 \# K_2) &\leq& \max\{tr(K_1),tr(K_2)\}.
\end{eqnarray*}

\begin{example}
Consider the alternating knots $K_1 = 4_1$ and $K_2 = 8_5$.  We have $b(K_1) = 2$, $w(K_1) = 8$, and $tr(K_1) = 4$, while $b(K_2) = 3$, $w(K_2) = 18$, and $tr(K_2) = 6$.  The stacked embedding shown in Figure \ref{fig:ex1} minimizes bridge number, width, and trunk for $K = K_1 \# K_2$, where $b(K) = 4$, $w(K) = 24$, and $tr(K) = 6$ (in this case, each inequality above is sharp).
\begin{figure}[h!]
  \centering
    \includegraphics[width=.3\textwidth]{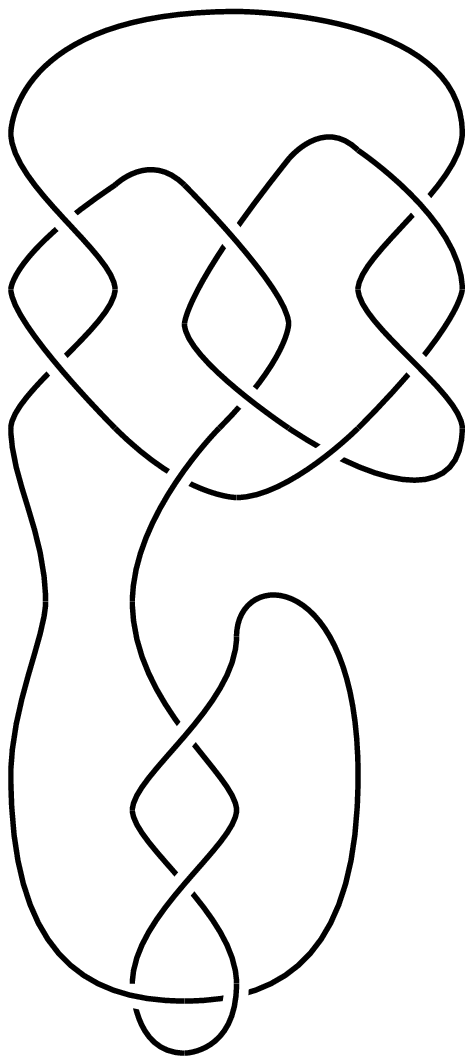} 
\caption{Stacked embeddings of $K_1 = 4_1$ and $K_2=  8_5$}
    \label{fig:ex1}
\end{figure}
\end{example}

Naturally, one might wonder whether stacking is the best we can do for each of the three invariants.  For bridge number, this is indeed optimal, as shown by Schubert (with an updated proof given by Schultens):

\begin{theorem}\cite{schubert,schultens}
For any two knots $K_1$ and $K_2$ in $S^3$,
\[ b(K_1 \# K_2) = b(K_1) + b(K_2) - 1.\]
\end{theorem}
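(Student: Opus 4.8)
The inequality $b(K_1 \# K_2) \le b(K_1) + b(K_2) - 1$ has already been recorded above via the stacking construction, so the entire content of the theorem is the reverse inequality $b(K_1 \# K_2) \ge b(K_1) + b(K_2) - 1$. The plan is to start from an efficient embedding of the connected sum and recover from it efficient embeddings of the two summands. Concretely, I would fix an embedding $k$ of $K = K_1 \# K_2$ realizing $b(K)$, with a bridge sphere $\Sigma = h^{-1}(r)$ separating the $b(K)$ maxima from the $b(K)$ minima, so that $\Sigma$ meets $k$ in $2\, b(K)$ points and each component of $k$ above (resp. below) $\Sigma$ is a bridge, i.e.\ an arc with a single maximum (resp. minimum). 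I would then introduce a decomposing sphere $S$ for the connected sum: a $2$-sphere meeting $K$ transversally in two points and splitting $S^3$ into balls $B_1, B_2$, where $(B_i, B_i \cap K)$ is the one-string tangle whose closure by a trivial arc $\alpha_i$ is $K_i$.

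The heart of the argument is to isotope $S$ so that it meets $\Sigma$ as simply as possible. First I would isotope $S$ to minimize $|S \cap \Sigma|$; this intersection is a disjoint union of circles, and I claim it can be reduced to a single circle. The main tool is an innermost-circle argument on $\Sigma$: an intersection circle bounding a subdisk of $\Sigma$ disjoint from $k$ bounds a disk on $S$ as well (since $S$ is a sphere), and surgering $S$ along the $\Sigma$-subdisk removes the circle without increasing $|S \cap k|$, contradicting minimality. The delicate circles are those whose innermost subdisk in $\Sigma$ meets $k$; here one uses that $k$ lies in bridge position, so that the relevant subarcs are trivial, together with the fact that $S$ meets $k$ in only two points, to show that such circles can likewise be removed or never arise in a minimal configuration.

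Once $S \cap \Sigma$ is a single circle $c$, it cuts $\Sigma$ into two disks $D_i = \Sigma \cap B_i$ and cuts $S$ into two disks. Capping the tangle $(B_i, B_i \cap k)$ off by a disk of $S$, one obtains an embedding of $K_i$ for which $D_i$, suitably completed, serves as a bridge sphere. The final step is the bridge count: the $b(K)$ bridges of $k$ are distributed between $B_1$ and $B_2$, and closing each tangle into $K_i$ along the single disk of $S$ can be arranged to introduce exactly one additional bridge in total, namely a monotone arc through that disk. This yields $b(K_1) + b(K_2) \le b(K) + 1$, which is precisely the desired lower bound.

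I expect the main obstacle to be the reduction of $S \cap \Sigma$ to a single circle, and specifically the disposal of intersection circles whose innermost disk on $\Sigma$ meets the knot, since these cannot be removed by a naive surgery and instead require exploiting the triviality of the bridge arcs and the two-point intersection $S \cap k$. This combinatorial analysis is the technical core of Schubert's original proof and of Schultens' streamlined sweep-out version; by contrast, the bookkeeping that turns the single-circle picture into the exact ``$+1$'' in the bridge count should be comparatively routine.
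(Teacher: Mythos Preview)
The paper does not actually prove this theorem. It is stated as a known result and attributed to Schubert and Schultens via citation; no argument is supplied in the paper itself. Consequently there is no ``paper's own proof'' against which to compare your proposal.

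For what it is worth, your outline is a reasonable sketch of the classical cut-and-paste argument: put the connected sum in minimal bridge position, take a decomposing sphere $S$, minimize $|S \cap \Sigma|$, and use an innermost-circle analysis to reduce $S \cap \Sigma$ to a single curve, then count bridges on each side. You correctly flag the genuine difficulty, namely handling innermost circles on $\Sigma$ whose disks meet the knot; this is where the real work lies, and your sketch is (appropriately) vague there. Note that Schultens' modern proof is organized somewhat differently, using a sweepout of the companion solid torus and a graphic/Cerf-theory analysis rather than a direct surgery on a single decomposing sphere, so if you intend to fill in the details you should decide which of the two frameworks you are actually following.
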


For width, the picture is somewhat murkier.  First, Rieck and Sedgwick proved the following:

\begin{theorem}\cite{riecksedg}
If $K_1$ and $K_2$ are mp-small knots in $S^3$, then
\[ w(K_1 \# K_2) = w(K_1) + w(K_2) - 2.\]
\end{theorem}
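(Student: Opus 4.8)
The upper bound $w(K_1 \# K_2) \le w(K_1) + w(K_2) - 2$ is exactly the stacking inequality recorded above, so the content of the theorem is the reverse inequality, and I would treat only that. If either summand is trivial the statement is immediate (the unknot has width $2$ and $K \# U = K$), so assume $K_1$ and $K_2$ are both nontrivial, and put $K = K_1 \# K_2$ in thin position. The plan is to exhibit, \emph{in this fixed thin position}, a level sphere $P = h^{-1}(r)$ meeting $K$ transversely in exactly two points and separating the two summands. Granting this, I would cut $K$ along $P$ into a tangle above $P$ and a tangle below $P$, and cap each with a single monotone trivial arc: a minimum placed just below $P$ for the upper tangle and a maximum placed just above $P$ for the lower tangle. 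This produces embeddings of $K_1$ and $K_2$. Writing $x_m = 2$ for the width of $K$ at the level $P$, these capped embeddings have widths $2 + \sum_{i>m} x_i$ and $2 + \sum_{i<m} x_i$, so that $w(K_1) + w(K_2) \le 4 + (w(K) - x_m) = w(K) + 2$, which is precisely the desired lower bound. Thus the whole problem reduces to locating the level decomposing sphere $P$.

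To find $P$, I would begin with an abstract decomposing sphere $\Sigma$ for the connected sum chosen so that $|\Sigma \cap K| = 2$; then $A = \Sigma \cap E(K)$ is a meridional annulus, essential in $E(K)$ precisely because both summands are nontrivial. I would isotope $\Sigma$ into Morse position with respect to $h$ and minimize, in lexicographic order, the number of intersections with $K$ and the number of saddle tangencies of $\Sigma$ with the level spheres $h^{-1}(t)$. The objective is to show that after this minimization $\Sigma$ has no saddles, so that it is a single level sphere, which we take to be $P$. The mechanism is to track how $\Sigma$ meets the thick and thin level spheres of the thin position of $K$: a thin level sphere of $K$ is incompressible and meridionally incompressible in $E(K)$, and each piece of $\Sigma$ cut off by such a sphere, pushed into a summand ball and capped off, becomes a planar meridional surface in $E(K_1)$ or in $E(K_2)$.

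The main obstacle, and the step where the hypothesis is indispensable, is the removal of these saddles. Because $K_1$ and $K_2$ are mp-small, neither summand exterior contains an incompressible planar meridional surface, so every planar meridional piece arising above must compress or be boundary-parallel. A compression or boundary-parallelism of such a piece supplies exactly the data needed for a surgery or edge-slide on $\Sigma$ that cancels one of its saddles against a critical point of $K$ — the kind of reimbedding move provided by the Scharlemann--Schultens theorem that this paper also invokes for the trunk. The delicate point is to organize these moves so that each strictly decreases the chosen complexity, so that the process terminates, and so that it never raises the width of $K$ (which is already minimal), eventually leaving $\Sigma$ as a genuine level sphere. This is the heart of the argument, and that it genuinely requires mp-smallness is not an artifact of the method: Blair and Tomova exhibit knots for which width is strictly subadditive, so in general no saddle-free positioning of the decomposing sphere can exist.
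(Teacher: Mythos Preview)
The paper does not prove this theorem; it is quoted from \cite{riecksedg} as background and no argument is supplied. There is therefore nothing in the paper to compare your proposal against.

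Regarding the proposal on its own terms: the overall strategy --- locate a \emph{level} decomposing sphere in thin position and then split --- is the right one, and your width bookkeeping after such a sphere is found is correct. But the mechanism you describe for producing that level sphere is not the Rieck--Sedgwick argument and has a genuine gap. Rieck and Sedgwick do not start from an abstract decomposing sphere $\Sigma$ and try to cancel its saddles. They work in the other direction: by results of Thompson and Wu, every \emph{thin level} of a knot in thin position is an essential meridional planar surface in $E(K)$; intersecting such a thin level with the decomposing annulus and invoking mp-smallness of $K_1$ and $K_2$ forces that thin level itself to be (parallel to) a decomposing sphere meeting $K$ twice. Your saddle-cancellation scheme, by contrast, is only gestured at: the phrase ``piece of $\Sigma$ cut off by a thin sphere, pushed into a summand ball'' presupposes a decomposing sphere already in hand, and the Scharlemann--Schultens theorem you invoke is a height-preserving reimbedding of a solid torus --- it does not furnish isotopies that remove saddles of a $2$-sphere against critical points of $K$. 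Finally, your outline does not address the case in which thin position of $K_1\#K_2$ has no thin levels (i.e.\ is a bridge position), which in the actual proof requires a separate argument.
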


On the other hand, using examples exhibited by Scharlemann and Thompson~\cite{scharthomp}, Blair and Tomova proved that stacking does not always give minimal width.

\begin{theorem}\cite{blairtom}
There exist knots $K_1$ and $K_2$ in $S^3$ with the property that
\[ w(K_1 \# K_2) = \max\{w(K_1),w(K_2)\} < w(K_1) + w(K_2) - 2.\]
\end{theorem}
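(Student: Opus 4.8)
The plan is to prove the theorem by exhibiting an explicit pair of knots $K_1,K_2$ and verifying both displayed relations separately. The strict inequality $\max\{w(K_1),w(K_2)\} < w(K_1)+w(K_2)-2$ will be automatic once both summands are nontrivial, since $\max\{a,b\} < a+b-2$ holds exactly when $\min\{a,b\} > 2$, and every nontrivial knot has width greater than $2$ (width $2$ characterizes the unknot). The substance is therefore the equality $w(K_1 \# K_2) = \max\{w(K_1),w(K_2)\}$. It is worth emphasizing that this is the reverse of the trunk situation: for the trunk, stacking already produces the inequality $tr(K_1 \# K_2) \le \max\{tr(K_1),tr(K_2)\}$ in the desired direction, whereas here stacking yields only $w(K_1 \# K_2) \le w(K_1)+w(K_2)-2$, which is useless as an upper bound. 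Thus the delicate direction for width is the upper bound $w(K_1 \# K_2) \le \max\{w(K_1),w(K_2)\}$, and it demands a genuinely non-stacked embedding.

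For the candidate knots I would take the examples of Scharlemann and Thompson \cite{scharthomp}. These are engineered so that each $K_i$ admits a thin position containing an \emph{essential thin sphere}: a thin level $h^{-1}(r_i)$ whose associated $2$-sphere is incompressible in the knot exterior and separates $K_i$ into two knotted tangles. The essentiality is exactly what obstructs the naive simplification of the connected sum and is ultimately responsible for the additivity defect. The upper bound is then obtained by a nesting construction rather than by stacking: exploiting the long product region adjacent to each essential thin sphere, I would reposition the two thin positions so that a wide portion of one summand occupies a height interval in which the other summand is narrow, causing the two width profiles to \emph{overlap} rather than concatenate. After leveling the resulting Morse embedding, one computes the width of each regular level and checks that the running total never exceeds $\max\{w(K_1),w(K_2)\}$, with equality attained at the widest level.

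This width accounting is the step I expect to be the main obstacle. Arranging the overlap so that the contributions of the two summands reinforce rather than add, and verifying the total comes out to the maximum for the specific examples, rests entirely on the fine combinatorial structure of the Scharlemann--Thompson knots and would follow the analysis of Blair and Tomova \cite{blairtom}; this is where all the real work lies.

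For the lower bound $w(K_1 \# K_2) \ge \max\{w(K_1),w(K_2)\}$, I would argue that capping off a summand cannot increase width. Starting from any thin position $k$ of $K_1 \# K_2$ together with a decomposing sphere $P$ meeting $k$ in two points, the reimbedding theorem of Scharlemann and Schultens \cite{scharschult} --- developed precisely to study width --- lets me control $P$ relative to the level spheres and replace the tangle on the $K_2$ side of $P$ by a trivial vertical arc without raising the width. The result is an embedding of $K_1$ of width at most $w(k)$, so $w(k) \ge w(K_1)$, and symmetrically $w(k) \ge w(K_2)$. Minimizing over all thin positions yields $w(K_1 \# K_2) \ge \max\{w(K_1),w(K_2)\}$, which together with the construction gives the asserted equality.
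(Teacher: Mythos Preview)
The paper does not prove this theorem; it is quoted from \cite{blairtom} as background, with no argument supplied here. So there is nothing in the paper to compare your proposal against.

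Regarding your sketch of the Blair--Tomova result itself: you correctly identify that the lower bound $w(K_1\#K_2)\ge\max\{w(K_1),w(K_2)\}$ is the Scharlemann--Schultens inequality, and that the substance lies in the upper bound. But your description of how the upper bound is obtained is not the actual mechanism. The construction does \emph{not} start from separate thin positions of $K_1$ and $K_2$ and then ``nest'' or ``overlap'' them so that wide regions of one sit over narrow regions of the other; there is no general procedure of that kind producing an embedding of the connected sum. Rather, the Scharlemann--Thompson templates (the very embeddings $k_{r_1,r_2,s_1,s_2}$ pictured later in this paper) are built from the start as connected sums via a \emph{swallow--follow torus}: one summand is carried along a bundle of parallel strands of the other and contributes nothing extra to the level-set counts. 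The ``essential thin sphere'' you invoke is not the operative object here; the swallow--follow torus is. The genuinely hard step, which you rightly defer to \cite{blairtom}, is certifying that the resulting embedding $k_{3,0,3,3}$ is actually thin position for the composite knot --- that no isotopy reduces the width below $134$. That verification is the entire content of the Blair--Tomova paper and is well beyond a sketch.

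Your lower-bound paragraph reaches the right conclusion, though the mechanism is more cleanly phrased via Theorem~\ref{reimbed} exactly as in the proof of Theorem~\ref{thm:trunk}: view $K_1\#K_2$ as a satellite with pattern $K_1$, reimbed the solid torus height-preservingly to unknot it, and read off $w(K_1)\le w(k)$. The ``replace a tangle by a vertical arc'' picture is heuristically fine but is not what the Scharlemann--Schultens theorem literally does.
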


In~\cite{ozawa}, Ozawa conjectured that the upper bound obtained by stacking is optimal with respect to trunk; namely

\begin{conjecture}\cite{ozawa}
For any two knots $K_1$ and $K_2$ in $S^3$,
\[ tr(K_1 \# K_2) = \max\{tr(K_1),tr(K_2)\}.\]
\end{conjecture}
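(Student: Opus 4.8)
The inequality $tr(K_1 \# K_2) \le \max\{tr(K_1),tr(K_2)\}$ is already established by stacking, so the plan is to prove the reverse inequality $\max\{tr(K_1),tr(K_2)\} \le tr(K_1 \# K_2)$. By symmetry it suffices to show $tr(K_1) \le tr(K_1 \# K_2)$; the identical argument with the roles of the two summands exchanged then gives $tr(K_2) \le tr(K_1 \# K_2)$. Recall, as noted after the satellite definition, that $K = K_1 \# K_2$ is a satellite knot with companion $K_1$: there is a knotted solid torus $\hat{V} \subset S^3$ whose core is isotopic to $K_1$ and which contains $K$ so that $K$ meets every meridian disk of $\hat{V}$.

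First I would take an arbitrary embedding $k \in \K$ of $K$ and position the companion solid torus $\hat{V}$ so that $k \subset \hat{V}$. The goal is to extract from $k$ an embedding $k_1$ of $K_1$ (the core of $\hat{V}$) whose maximal level-set width is no larger than that of $k$. This is exactly the situation addressed by the reimbedding theorem of Scharlemann and Schultens: a knot lying in a knotted solid torus and meeting every meridian disk can be used to produce an embedding of the companion core via a reimbedding of $\hat{V}$ that is controlled with respect to the height function $h$. I would invoke this theorem to obtain an embedding $k_1$ of $K_1$ for which every level set $h^{-1}(r) \cap k_1$ is dominated by the corresponding level set of $k$, so that $tr(k_1) \le tr(k)$.

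Granting this, the theorem follows quickly. Since $tr(K_1)$ is by definition the minimum of $tr(k')$ over all Morse embeddings $k'$ of $K_1$, we have $tr(K_1) \le tr(k_1) \le tr(k)$; as $k$ was arbitrary, taking the minimum over $k \in \K$ yields $tr(K_1) \le tr(K_1 \# K_2)$. Applying the same reasoning to the complementary satellite structure, in which $K_2$ is the companion and $K_1$ the pattern, gives $tr(K_2) \le tr(K_1 \# K_2)$. Hence $\max\{tr(K_1),tr(K_2)\} \le tr(K_1 \# K_2)$, which together with the stacking bound establishes equality.

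The main obstacle I anticipate lies in the careful application of the reimbedding theorem. Scharlemann and Schultens proved their result as a statement about width $w$, that is, about the sum $\sum x_i$ of level-set widths, whereas trunk is the maximum $\max\{x_i\}$. To adapt it I would need to verify that the reimbedding producing $k_1$ does not increase any \emph{individual} level-set width, not merely the total; in other words, that the entire width profile of the knot is only decreased. This should follow by inspecting the moves used in their construction, namely isotopies and local re-embeddings of the complementary region that introduce no new intersections of the knot with any level sphere, but confirming the per-level bound rather than just the bound on the sum is the delicate point. A secondary issue is to check that the reimbedding genuinely returns the companion knot type and that the hypothesis that $k$ meets every meridian disk of $\hat{V}$ holds for the chosen satellite structure, which for a connected sum is guaranteed by the winding number being one.
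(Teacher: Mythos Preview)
Your overall architecture matches the paper's: use stacking for the easy inequality, then use the satellite structure of the connected sum together with the Scharlemann--Schultens reimbedding theorem for the reverse inequality, and finish by symmetry. Two points, however, need correction.

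First, you have the roles of pattern and companion reversed when you invoke the reimbedding theorem. As stated in the paper (Theorem~\ref{reimbed}), the theorem takes the pattern $\hat{K}$ sitting in a model solid torus $V$, a knotted embedding $\varphi$ of $V$, and returns a height-preserving reimbedding $\varphi'$ with $\varphi'(V)$ unknotted and $\varphi'(\hat{K})$ isotopic to the \emph{pattern} $\hat{K}$. It does not produce an embedding of the companion core. In your setup you made $K_1$ the companion, so the theorem would hand you back an embedding of $K_2$, not $K_1$. The paper instead takes $K_1$ as the pattern and $K_2$ as the companion; the reimbedding then directly yields an embedding $k'$ of $K_1$ with the same height data as $k$, giving $tr(K_1\#K_2)=tr(k)=tr(k')\ge tr(K_1)$. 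Because either summand may serve as the pattern, your argument is salvageable by swapping the labels, but as written it misstates what the theorem produces.

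Second, your ``main obstacle'' is not an obstacle at all. The reimbedding theorem is not merely a statement about the sum $\sum x_i$; its conclusion $h\circ\varphi = h\circ\varphi'$ on $V$ says the reimbedding is literally height-preserving. Hence $|h^{-1}(r)\cap k| = |h^{-1}(r)\cap k'|$ for every regular value $r$, so the entire width profile is reproduced exactly, and $tr(k)=tr(k')$ on the nose. No separate per-level analysis or inspection of the moves in the Scharlemann--Schultens construction is required; trunk (and indeed any invariant computed from the list of level widths) transfers immediately.
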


Following the work of Rieck and Sedgwick referenced above~\cite{riecksedg}, Ozawa proved that the conjecture is true for mp-small knots.

In the present work, we prove Ozawa's conjecture.  To do this, we require machinery originally developed by Scharlemann and Schultens to understand the behavior of width under connected sums~\cite{scharschult}.  Although their theorem can be applied more generally, we state it using the formulation in~\cite{hendricks}.

\begin{theorem}\cite{hendricks, scharschult}\label{reimbed}
Let $h$ be the standard Morse function on $S^3$, and let $\hat{K}$ be a knot in an unknotted solid torus $V \subset S^3$.  For every possibly knotted embedding $\varphi: V \rightarrow S^3$, there is a reimbedding $\varphi': V \rightarrow S^3$ such that
{\be
\item $h \circ \varphi = h \circ \varphi'$ on the solid torus $V$, 
\item $\varphi'(V)$ is an unknotted solid torus, and 
\item $\varphi'(\hat{K})$ is isotopic to $\hat{K}$ in $S^3$.
\ee}
\end{theorem}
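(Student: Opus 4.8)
The plan is to recast the conclusion as a statement about realizing one fixed height function by an unknotted solid torus, and then to build the reimbedding by cutting $\varphi(V)$ into balls along meridian disks and reassembling those balls, each carrying its own height data, into an unknotted necklace.

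First I would reformulate. Write $P = \varphi(V)$ and let $f = h \circ \varphi \colon V \to \R$ be the Morse function that $\varphi$ pulls back from $h$; after a preliminary isotopy I may assume $h$ is Morse on $\partial P$ with critical values distinct from those of $h$ on $P$ and on $\varphi(\hat K)$. The point is that producing a $\varphi'$ with $h \circ \varphi' = f$ is the same as producing a height-preserving homeomorphism $\psi = \varphi' \circ \varphi^{-1} \colon P \to P'$ onto a second solid torus $P' = \varphi'(V)$, since for $y = \varphi(x) \in P$ one has $h(\psi(y)) = h(\varphi'(x)) = f(x) = h(\varphi(x)) = h(y)$. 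Thus condition (1) is equivalent to realizing the pair $(P, h|_P)$ by an unknotted solid torus through a height-preserving homeomorphism, condition (2) asks that $P'$ be unknotted, and condition (3) asks that the induced copy of the pattern be isotopic to $\hat K$. Because the isotopy class of $\hat K$ inside the abstract solid torus $V$ is intrinsic and is carried unchanged by $\psi$, condition (3) will follow once $P'$ is unknotted and $\psi$ respects meridians.

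The main construction is as follows. I would choose a collection of $n \geq 2$ disjoint meridian disks $D_1, \dots, D_n$ of $P$ cutting $P$ into $3$-balls $B_1, \dots, B_n$, arranged cyclically with $D_i = B_i \cap B_{i+1}$ (indices mod $n$), each $B_i$ inheriting the height function $h|_{B_i}$. I would then reassemble these pieces: move each $B_i$ by a height-preserving (``horizontal'') motion $\iota_i$ of $S^3$, chosen so that the motions agree on the shared disks, $\iota_i|_{D_i} = \iota_{i+1}|_{D_i}$, and so that the chain of balls, reglued along the moved disks, forms a solid torus $P'$ lying in an unknotted planar configuration, hence with solid-torus complement. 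Since each $\iota_i$ preserves height and the pieces are reglued along the same disks as before, the map $\psi$ assembled from the $\iota_i$ is a height-preserving homeomorphism $P \to P'$, yielding (1) and (2); and since the balls together with the arcs of $\varphi(\hat K)$ inside them are reglued by the identical combinatorial pattern, $\varphi'(\hat K)$ realizes the same pattern in the now-unknotted $P'$, yielding (3).

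The tool for organizing the choice of meridian disks and the horizontal motions is a Cerf-theoretic comparison (a Rubinstein--Scharlemann graphic) between the sweep-out of $S^3$ by level spheres $S_t = h^{-1}(t)$ and a meridional sweep-out of $P$, which lets me select the $D_i$ so that their height profiles and their intersections with the $S_t$ are controlled. The hard part will be the simultaneous satisfaction of three competing demands during the reassembly: keeping every motion strictly height-preserving, forcing the motions to agree on the cutting disks $D_i$ so that the reglued object is genuinely embedded, and still arriving at an unknotted necklace. Verifying that the prescribed boundary behavior on the disks extends to compatible horizontal isotopies of the balls — an isotopy-extension argument carried out level sphere by level sphere — is where the real work lies, together with the bookkeeping needed to confirm that $\psi$ sends the meridian of $V$ to a meridian of $P'$, so that the framing is standard and the pattern lands in the correct isotopy class for (3).
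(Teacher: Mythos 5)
You should first be aware that the paper contains no proof of Theorem \ref{reimbed}: it is quoted from Scharlemann--Schultens \cite{scharschult} (in the formulation of \cite{hendricks}) and used as a black box. So your attempt must be measured against the Scharlemann--Schultens argument itself, and against that standard it has a genuine gap. The heart of your proposal --- ``move each $B_i$ by a height-preserving motion $\iota_i$, chosen so that the motions agree on the shared disks \dots and so that the chain of balls \dots forms a solid torus $P'$ lying in an unknotted planar configuration'' --- is not a construction but a restatement of the theorem. The knotting of $\varphi(V)$ is not localized in any single ball of a meridian-disk decomposition; each $B_i$ is an unknotted ball, and the knotting lives entirely in how the balls interleave across the level spheres. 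A height-preserving motion acts on every level sphere simultaneously, so your family $\{\iota_i\}$ must keep the $n$ images pairwise disjoint at every level, agree along the cutting disks $D_i$, and unknot the union --- and you provide no mechanism for any of these. The difficulty is compounded by your choice of decomposition: the disks $D_i$ are not level, and in a knotted solid torus meridian disks necessarily have complicated height profiles, so the matching condition $\iota_i|_{D_i} = \iota_{i+1}|_{D_i}$ is a constraint smeared across many levels; the Rubinstein--Scharlemann graphic you invoke compares two sweep-outs but does nothing as written to tame this. This is exactly why Scharlemann and Schultens cut along \emph{level spheres} rather than meridian disks: the resulting pieces lie between consecutive critical values, height-preservation of any reassembly is then automatic, and the genuine content of the theorem is carried by their machinery of planar presentations --- an inductive rearrangement of how the planar pieces of $\varphi(V)$ and its complement sit inside the level spheres --- which is precisely the step your ``unknotted necklace'' assertion skips.

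There is a second, independent gap in your treatment of conclusion (3). You claim it follows once $P'$ is unknotted and $\psi$ ``respects meridians,'' but every homeomorphism between solid tori carries meridians to meridians, so this condition is vacuous; the real ambiguity is the framing. Two embeddings of $V$ with unknotted image differ, up to isotopy, by powers of a Dehn twist along a meridian disk, and meridional twisting changes the resulting satellite in general: for a Whitehead-type pattern (winding number zero), distinct twists yield distinct twist knots, none isotopic to $\hat{K}$. So unknottedness of $\varphi'(V)$ alone does not give $\varphi'(\hat{K}) \simeq \hat{K}$; one must additionally control or correct the twisting --- for instance by composing with a height-preserving twist supported in a vertical product region of $P'$ of the form (disk)$\times I$, where the rotation $(z,t) \mapsto (e^{2\pi i t}z, t)$ realizes a meridional Dehn twist without changing heights --- and your proposal contains no such step.
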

The first conclusion, $h \circ \varphi = h \circ \varphi'$ on $V$, implies that the reimbedding $\varphi'$ is \emph{height-preserving}.  More specifically, let $k = \varphi(\hat{K})$ and $k' = \varphi'(\hat{K})$.  Then $k'$ is isotopic to $\hat{K}$, and $k'$ is a satellite knot with pattern $\hat{K}$.  Moreover, $h_k$ and $h_{k'}$ have the same critical values $c_0 < \dots < c_n$, and for regular values $c_0 < r_1 < \dots < r_n < c_n$, we have $|h^{-1}(r_i) \cap k| = |h^{-1}(r_i) \cap k'|$ for all $i$, so that the combinatorial data carried by $h_k$ is identical to that of $h_{k'}$ -- in particular, $tr(k) = tr(k')$.  We are now equipped to prove Ozawa's conjecture.

\begin{proof}[Proof of Theorem \ref{thm:trunk}]
As noted above, $tr(K_1 \# K_2) \leq \max\{tr(K_1),tr(K_2)\}$; hence we need to show that we cannot do any better than this bound.  For this purpose, let $k$ be an embedding of $K_1 \# K_2$ such that $tr(k) = tr(K_1 \# K_2)$.  Since $K_1 \# K_2$ may be viewed as a satellite knot, where the pattern $\hat{K}$ is $K_1$ and the companion is $K_2$, there is a solid torus $V$ containing $\hat{K} = K_1$ and a knotted embedding $\varphi: V \rightarrow S^3$ such that $\varphi(\hat{K}) = k$.

By Theorem~\ref{reimbed}, there is a reimbedding $\varphi':V \rightarrow S^3$ so that $h \circ \varphi = h \circ \varphi'$ on $V$ and $k' = \varphi(\hat{K})$ is isotopic to $\hat{K} = K_1$.  By the discussion following Theorem~\ref{reimbed}, we have $tr(k) = tr(k')$, and thus
\[ tr(K_1 \# K_2) = tr(k) = tr(k') \geq tr(K_1).\]
A parallel argument replaces $K_1$ with $K_2$, and thus
\[ tr(K_1 \# K_2) \geq \max\{tr(K_1),tr(K_2)\}.\]
Taken together, the two inequalities yield
\[ tr(K_1 \# K_2) = \max\{tr(K_1),tr(K_2)\},\]
as desired.
\end{proof}

\section{Thin position versus trunk position}

In this section, we examine another of Ozawa's conjectures:

\begin{conjecture}\cite{ozawa}\label{thintrunk}
Suppose $k$ is a thin position of $K$, so that $w(K) = w(k)$.  Then $k$ is also a minimal trunk position; that is, $tr(K) = tr(k)$.
\end{conjecture}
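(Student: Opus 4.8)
Given the way the paper is framed in its introduction, the natural plan is \emph{not} to prove this statement but to look for a counterexample, and the structure of the two invariants explains why. Width minimizes the \emph{sum} $\sum x_i$ of the level-set widths, whereas trunk minimizes their \emph{maximum} $\max_i x_i$. These are genuinely different optimization problems, and there is no formal reason that a configuration minimizing a sum should also minimize the largest term: an embedding can keep $\sum x_i$ small by concentrating strands into one tall, wide level (large trunk), while a competing embedding spreads the strands over many levels (large width) yet keeps every individual level narrow (small trunk). Any attempt at a direct proof --- for instance, arguing that a width-reducing isotopy never increases the maximum level set --- seems unlikely to succeed, precisely because the moves that lower $\sum x_i$ may freely trade a flat, wide height profile for a tall, spiky one.

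Accordingly, I would search for an explicit knot $K$ with a thin position $k$ satisfying $tr(k) > tr(K)$, and the most promising supply is the connected-sum setting, where two facts can be played against one another. On one hand, Theorem~\ref{thm:trunk} pins down $tr(K_1 \# K_2) = \max\{tr(K_1),tr(K_2)\}$. On the other hand, the Scharlemann--Thompson and Blair--Tomova examples produce knots with $w(K_1 \# K_2) = \max\{w(K_1),w(K_2)\}$, strictly below the stacked bound $w(K_1)+w(K_2)-2$. In such a knot the width-minimizing embedding does not stack the summands but \emph{nests} one inside a thin region of the other, so that the summands overlap over a common height interval --- this overlap is exactly what saves total width. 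But overlapping in height forces the level sets across that interval to carry strands of both summands simultaneously, so one expects the maximum such level, and hence $tr(k)$, to exceed $\max\{tr(K_1),tr(K_2)\} = tr(K)$. Verifying this for a specific pair $K_1,K_2$ is only a finite combinatorial computation on the Morse data of the candidate embedding $k$, and that gap $tr(k) > tr(K)$ is precisely the desired counterexample.

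The main obstacle is not computing $tr(k)$ but \emph{certifying that $k$ is genuinely thin}, i.e.\ establishing the matching width lower bound $w(k) = w(K)$. Thin position is notoriously hard to pin down: there is no general algorithm for it, and width lower bounds typically demand delicate sweepout or mp-small arguments rather than the upper bounds one gets from an explicit embedding. This is the same gap the authors flag in the introduction when they speak of ``probable counterexamples'' and of stopping short of proving that their embeddings minimize width. I therefore expect the honest outcome to be a family of concrete embeddings whose trunk provably exceeds $tr(K)$, paired with strong but non-rigorous evidence that each minimizes width --- exactly the content of the conjecture stated in the introduction asserting the existence of width-minimizing embeddings that fail to minimize trunk.
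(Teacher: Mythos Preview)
Your diagnosis is right on the main point: this is a conjecture the paper attacks by hunting for counterexamples, not by proving it, and you correctly locate the Scharlemann--Thompson/Blair--Tomova template family as the place to look and correctly flag certifying thinness as the real obstruction. On that level your plan matches the paper's.

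Where you diverge is in the mechanism. You propose to exploit the connected-sum structure: use Theorem~\ref{thm:trunk} to nail down $tr(K_1\#K_2)=\max\{tr(K_i)\}$ exactly, then argue that the Blair--Tomova ``nested'' thin position forces level sets to carry both summands and hence has trunk strictly larger. The paper does something more bare-handed: it takes the template family $k_{r_1,r_2,s_1,s_2}$ and the isotopic alternate $k'_{r_1,r_2,s_1,s_2}$, plugs in two specific new parameter sets $(2,1,3,7)$ and $(4,1,3,3)$, and simply computes the thick/thin level widths of each via $w(k)=\tfrac12\sum a_i^2-\tfrac12\sum b_j^2$ and $b(k)=\tfrac12\sum a_i-\tfrac12\sum b_j$. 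For each pair one embedding wins on width while the other wins on trunk (and in one case also on bridge number), so the would-be thin embedding provably fails to minimize trunk. No appeal to Theorem~\ref{thm:trunk} or to a connected-sum decomposition is needed; the comparison between the two explicit embeddings already gives $tr(k)>tr(k')\ge tr(K)$.

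There is also a soft gap in your heuristic. In the certified Blair--Tomova thin position the small summand is \emph{swallowed} into a thin region of the big one, and the resulting height profile can coincide with that of the big summand alone; the ``overlap'' you describe need not push any level above the existing maximum. So the already-certified thin examples (e.g.\ $K_{3,0,3,3}$) may well satisfy Conjecture~\ref{thintrunk}, which is presumably why the paper moves to \emph{new} parameter values tuned to produce the trunk discrepancy --- at the cost, as both you and the authors note, of losing the proof that the candidate is actually thin.
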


We will produce several potential counterexamples to Ozawa's conjecture.  Unfortunately, the width of these examples is prohibitively large (greater than 200); hence, we make no attempt to prove that the conjectured embeddings are, in fact, thin position.

Consider the templates pictured below in Figures \ref{fig:embed1} and \ref{fig:embed2}, which were introduced in \cite{scharthomp} and further examined in \cite{blairtom2} and \cite{blairtom}.  Each embedding is equipped with a natural height function $h$, projection onto a vertical axis, and each box $B_i$ represents a \emph{braid}, a collection of arcs containing no critical points and connecting the top and bottom strands of the box.  As pictured, the parameters $r_1$ and $r_2$ represent some numbers of critical points and $s_1$ and $s_2$ represent some numbers of parallel strands.  For fixed braids, the embeddings $k_{r_1,r_2,s_1,s_2}$ and $k'_{r_1,r_2,s_1,s_2}$ are isotopic; we let $K_{r_1,r_2,s_1,s_2}$ denote the knot corresponding to these embeddings.

\begin{figure}[h!]
  \centering
    \includegraphics[width=.55\textwidth]{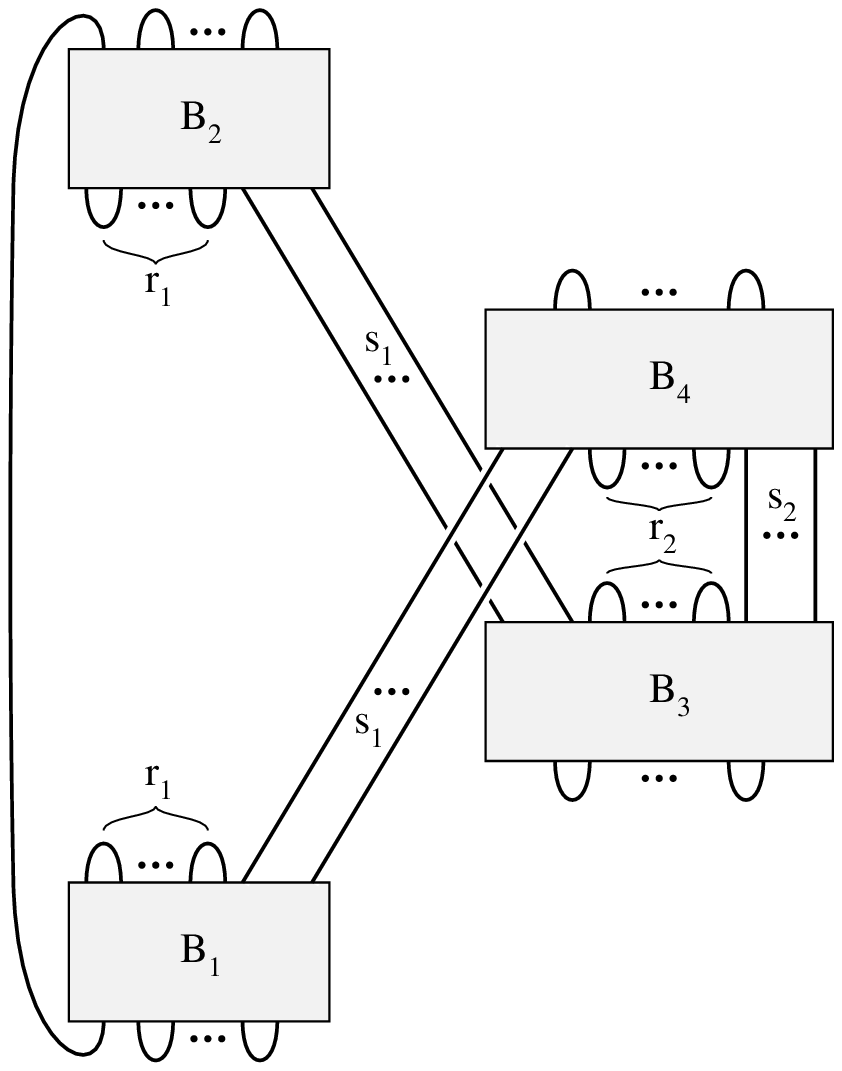} 
    \caption{The embedding $k_{r_1,r_2,s_1,s_2}$.}
    \label{fig:embed1}
\end{figure}

\begin{figure}[h!]
  \centering
 \includegraphics[width=.55\textwidth]{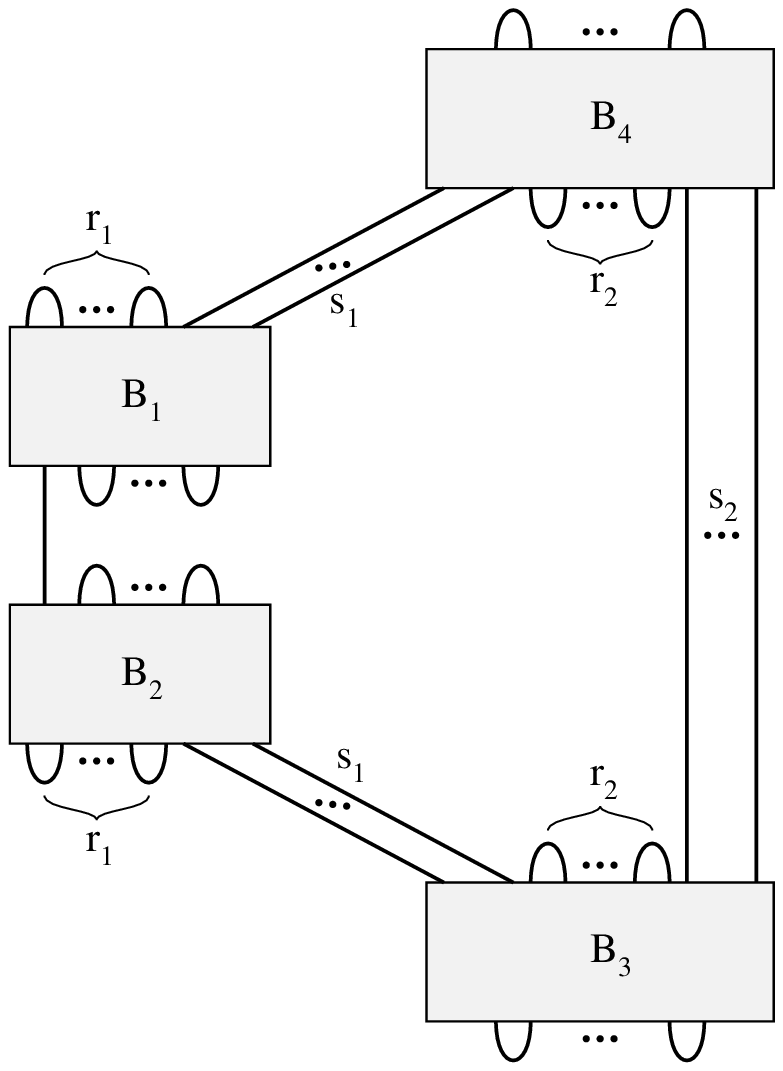} 
    \caption{The embedding $k'_{r_1,r_2,s_1,s_2}$.}
    \label{fig:embed2}
\end{figure}

In \cite{blairtom}, it was shown that
\begin{theorem}\cite{blairtom}
For sufficiently complicated braids $B_i$, the $w(K_{3,0,3,3}) = 134$ and $k_{3,0,3,3}$ is a thin position of $K_{3,0,3,3}$.
\end{theorem}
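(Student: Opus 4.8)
The plan is to prove the equality $w(K_{3,0,3,3}) = 134$ by establishing matching upper and lower bounds, the former by explicit computation and the latter by thin-position machinery. For the upper bound, I would read the sequence of critical points of $h$ directly off the template in Figure~\ref{fig:embed1} with $(r_1,r_2,s_1,s_2) = (3,0,3,3)$, record the width $x_i$ of each regular level between consecutive critical values, and sum to obtain $w(k_{3,0,3,3})$; verifying that this sum equals $134$ yields $w(K_{3,0,3,3}) \le w(k_{3,0,3,3}) = 134$ immediately. This step is purely combinatorial, and the only care needed is to confirm that the chosen regular levels capture every strand passing through each horizontal slab of the template, including those inside the braid boxes $B_i$.

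The substance of the theorem is the reverse inequality $w(K_{3,0,3,3}) \ge 134$, for which I would appeal to the structure theory of thin position. Let $k$ be any thin position of $K_{3,0,3,3}$. The standard dichotomy of thin-position theory (Thompson, Scharlemann--Thompson) says that either $k$ is already in bridge position, in which case its width is pinned down by its bridge number, or else some thin level two-sphere meets $k$ in an essential (incompressible and meridionally incompressible) meridional surface in the exterior $E(K_{3,0,3,3})$. I would organize the argument around the induced \emph{multiple bridge surface}: the collection of thick and thin levels of $k$, together with the essential surfaces they carry, decomposes $E(K_{3,0,3,3})$ into pieces whose widths sum to $w(k)$. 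The goal is then to show that no such decomposition can have total width smaller than that realized by the template.

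The leverage for the lower bound comes entirely from the hypothesis that the braids $B_i$ are \emph{sufficiently complicated}, which I would make precise as a lower bound on the bridge distance of the natural bridge surfaces of the template. Following the philosophy of high-distance rigidity, and the theory of topologically minimal surfaces, a bridge surface of large distance is essentially unique up to isotopy, and any essential surface in the exterior must intersect it in a tightly controlled way; in particular, the complicated braids obstruct the existence of any low-complexity essential meridional surface that would permit a thinner rearrangement. Concretely, I would argue that the essential surfaces produced by the thin levels of $k$ must be isotopic to the thin levels of the template itself, so that the thick/thin decomposition of $k$ refines to that of $k_{3,0,3,3}$ and its width is therefore bounded below by $134$.

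The main obstacle is precisely this rigidity step: ruling out every \emph{a priori} possible thin position of smaller width. One must show that the essential surfaces arising from the thin levels of an arbitrary thin position cannot be recombined or simplified to yield a decomposition of $E(K_{3,0,3,3})$ thinner than the template's, and it is here that the quantitative ``sufficiently complicated'' hypothesis is indispensable, since for simple braids genuinely thinner competitors do exist. I expect the bulk of the work to consist of a careful Rubinstein--Scharlemann graphic or cut-and-paste analysis comparing an arbitrary thin position against the template sweepout, using the distance bound to eliminate the degenerate configurations and force the two decompositions to agree.
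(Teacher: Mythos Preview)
The paper does not prove this theorem at all: it is quoted as a result of Blair--Tomova \cite{blairtom}, and the authors explicitly decline even to reproduce the technical definition of ``sufficiently complicated,'' referring the reader to the source. So there is no proof in this paper to compare your proposal against.

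That said, your outline is broadly faithful to the shape of the Blair--Tomova argument: an explicit computation for the upper bound, and for the lower bound an analysis of the essential meridional planar surfaces arising as thin levels of an arbitrary thin position, with the ``sufficiently complicated'' hypothesis translated into a distance condition that forces rigidity of the thick/thin decomposition. Be aware, though, that what you have written is a plan rather than a proof; the substantive work---classifying the essential planar surfaces in $E(K_{3,0,3,3})$ and showing they must coincide with the template's thin levels---is precisely the content of \cite{blairtom} and is not something you can expect to carry out in a paragraph. The present paper treats this as a black box, and you should too unless your goal is to reprove Blair--Tomova's result.
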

We do not reproduce the technical conditions which give rise to a rigorous definition of a \emph{sufficiently complicated braid}; instead, we refer the interested reader to \cite{blairtom}.  While the primary purpose of this theorem was to demonstrate that width is not additive under taking connected sums, it also provided the first example of a knot $K_{3,0,3,3}$ with a thin position $k_{3,0,3,3}$ such that $b(k_{3,0,3,3}) > b(K_{3,0,3,3})$, showing that width and bridge number need not be realized simultaneously.  In a similar vein, we will give evidence that neither trunk and width nor trunk and bridge number need be realized simultaneously for every knot $K$.

\begin{proposition}\label{widthtrunk}
For parameters $r_1 = 2, r_2 = 1, s_1 = 3, s_2=7$, we have
\begin{eqnarray*}
tr(k_{2,1,3,7}) = 16 &>& tr(k'_{2,1,3,7}) = 14 \\
w(k_{2,1,3,7}) = 206 &<& w(k'_{2,1,3,7}) = 208 \\
b(k_{2,1,3,7}) = 13 &>& b(k'_{2,1,3,7}) = 12.
\end{eqnarray*}
For parameters $r_1 = 4, r_2 = 1, s_1 = s_2 = 3$, we have
\begin{eqnarray*}
tr(k_{4,1,3,3}) = 12 &<& tr(k'_{4,1,3,3}) = 14 \\
w(k_{4,1,3,3}) = 222 &>& w(k'_{4,1,3,3}) = 216 \\
b(k_{4,1,3,3}) = 15 &>& b(k'_{4,1,3,3}) = 14.
\end{eqnarray*}
\end{proposition}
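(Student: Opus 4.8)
The plan is to verify these numbers by reading the width profile of each embedding directly off its template in Figures \ref{fig:embed1} and \ref{fig:embed2}. The key simplification is that, by hypothesis, each box $B_i$ is a braid and hence contains no critical points of $h$; consequently every maximum and minimum of $h_k$ (and of $h_{k'}$), together with the number of strands crossing each level sphere, is dictated entirely by the template and the parameters $r_1, r_2, s_1, s_2$. For a fixed choice of parameters I would first record the critical values $c_0 < \dots < c_p$ of the height function, and then, for each regular value $r_i$, compute $x_i = |h^{-1}(r_i) \cap k|$ by sweeping upward through the embedding and tracking how the strand count changes as each critical point is passed: a minimum raises the local count by two, a maximum lowers it by two, and the parallel bundles contribute their nominal $s_1$ or $s_2$ strands throughout the braid-box regions.

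With the profile $x_1, \dots, x_p$ in hand for each of the four relevant embeddings, the three invariants follow immediately from the definitions: the bridge number $b$ is the number of maxima of the height function, the width $w$ is the sum $\sum_i x_i$, and the trunk $tr$ is the single largest entry $\max_i x_i$. I would organize the bookkeeping by decomposing each template into horizontal blocks -- the parallel-strand regions governed by $s_1$ and $s_2$, and the braid-box regions where the $r_1$ and $r_2$ critical points sit -- computing the contribution of each block separately and then assembling the totals. Because $k_{r_1,r_2,s_1,s_2}$ and $k'_{r_1,r_2,s_1,s_2}$ are isotopic yet carry genuinely different height functions, these profiles differ, and the point of the proposition is precisely that the differences reverse sign across the three invariants for the two parameter choices.

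The main obstacle is not conceptual but combinatorial precision: the reported values differ by small constants (trunk $16$ versus $14$, width $206$ versus $208$, bridge $13$ versus $12$), so an error of a single strand at one level would corrupt the conclusion. The delicate step is pinning down exactly which level is widest in each embedding and counting the strands there correctly, since this both determines the trunk and governs where the two templates diverge; in particular I must track carefully how the repositioning of the braid boxes in passing from $k$ to $k'$ shifts the location of the thick levels. I expect the verification to reduce to producing the explicit width sequences for all four embeddings and reading off their maxima, sums, and maximum counts, with the only real care required in the strand counts near the braid boxes where the trunk is realized.
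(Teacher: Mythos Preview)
Your approach is correct and is essentially the one the paper takes: read the width profile off the template (since the braid boxes contribute no critical points) and compute the three invariants from that profile. The only difference is a bookkeeping shortcut. Rather than summing the full list $x_1,\dots,x_p$, the paper invokes the Scharlemann--Schultens formula
\[
w(k) = \tfrac{1}{2}\sum a_i^2 - \tfrac{1}{2}\sum b_j^2, \qquad b(k) = \tfrac{1}{2}\sum a_i - \tfrac{1}{2}\sum b_j,
\]
where the $a_i$ and $b_j$ are the widths of the thick and thin levels respectively. This reduces each computation to recording a handful of numbers (e.g.\ thick widths $\{8,16,16,8\}$ and thin widths $\{4,14,4\}$ for $k_{2,1,3,7}$) rather than the entire profile, which is especially welcome here since the profiles run up to width $16$ and the margins you are checking are small. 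Your direct summation would of course give the same answers, but using the thick/thin formula both shortens the arithmetic and makes the comparison between $k$ and $k'$ more transparent.
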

\begin{proof}
As shown in \cite{scharschult}, the width of an embedding $k$ can be computed from the widths $\{a_i\}$ of its thick levels and the widths $\{b_j\}$ of its thin levels:
\[ w(k) = \frac{1}{2} \sum a_i^2 - \frac{1}{2} \sum b_j^2.\]
In addition, we use a well-known formula for bridge number:
\[ b(k) = \frac{1}{2} \sum a_i - \frac{1}{2} \sum b_j.\]
Observe that $k_{2,1,3,7}$ has thick levels of widths $\{8, 16, 16, 8\}$ and thin levels of widths $\{4, 14, 4\}$; hence $tr\left(k_{2,1,3,7}\right) = 16$, and we have
\begin{eqnarray*}
w(k_{2,1,3,7}) &=& \frac{1}{2} \left( 8^2 + 16^2 +16^2 +8^2 - 4^2 - 14^2 - 4^2 \right) = 206 \\
b(k_{2,1,3,7}) &=& \frac{1}{2} \left( 8 + 16 +16 +8 - 4 - 14 - 4 \right) = 13.
\end{eqnarray*}
On the other hand, $k'_{2,1,3,7}$ has thick levels of widths $\{12, 14, 14, 12\}$ and thin levels of widths $\{10, 8, 10\}$; hence $tr(k'_{2,1,3,7}) = 14$, and we have
\begin{eqnarray*}
w(k'_{2,1,3,7}) &=& \frac{1}{2} \left( 12^2 + 14^2 +14^2 +12^2 - 10^2 - 8^2 - 10^2 \right) = 208 \\
b(k'_{2,1,3,7}) &=& \frac{1}{2} \left( 12 + 14 +14 +12 - 10 - 8 - 10 \right) = 12.
\end{eqnarray*}
The embedding $k_{4,1,3,3}$ has thick/thin level widths $\{12, 12, 12, 12\}$ and $\{4, 10, 4\}$, while the embedding $k'_{4,1,3,3}$ has thick/thin level widths $\{8, 14, 14, 8\}$ and $\{6, 4, 6\}$.  The corresponding calculations are similar.
\end{proof}

In general, showing that a conjectured thin position is actually thin position is a difficult proposition, especially for knots with large width.  The theory in \cite{blairtom} develops techniques in this direction, but this work verifies thin position for a class of knots $K$ such that $w(K) = 134$.  In \cite{blairzupan}, the authors use many of these techniques to find thin position for another class of knots $K$ with $w(K) = 78$.  Verifying that the knots from Theorem \ref{widthtrunk} have widths 206 or 216 is -- in our opinion -- beyond the limits of the current technology.  However, when the braids $B_i$ are sufficiently complicated, it is highly likely that minimal trunk, width, and bridge number are realized by one of the two embeddings for which they are computed in Proposition \ref{widthtrunk}, giving rise to the following conjecture:

\begin{conjecture}
For sufficiently complicated braids $B_i$, thin position of the knot $K_{2,1,3,7}$ does not realize minimal trunk or minimal bridge number.  For sufficiently complicated braids $B_i$, thin position of the knot $K_{4,1,3,3}$ realizes minimal bridge number but not minimal trunk.
\end{conjecture}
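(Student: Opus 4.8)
The plan is to establish the two clauses of the conjecture by combining the explicit computations of Proposition \ref{widthtrunk} with a separate verification that the lower-width template in each pair is genuinely a thin position. The decisive observation is that for $K_{2,1,3,7}$ the isotopic embeddings satisfy $w(k_{2,1,3,7}) = 206 < 208 = w(k'_{2,1,3,7})$, while $tr(k_{2,1,3,7}) = 16 > 14 = tr(k'_{2,1,3,7})$ and $b(k_{2,1,3,7}) = 13 > 12 = b(k'_{2,1,3,7})$; and for $K_{4,1,3,3}$ we have $w(k'_{4,1,3,3}) = 216 < 222 = w(k_{4,1,3,3})$, with $tr(k'_{4,1,3,3}) = 14 > 12 = tr(k_{4,1,3,3})$. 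Thus, once we know that the lower-width embedding in each pair realizes minimal width, the failure of Conjecture \ref{thintrunk} is immediate: the thin position $k_{2,1,3,7}$ has $tr = 16 > 14 \geq tr(K_{2,1,3,7})$ and $b = 13 > 12 \geq b(K_{2,1,3,7})$, where the upper bounds are supplied by the isotopic $k'_{2,1,3,7}$; and the thin position $k'_{4,1,3,3}$ has $tr = 14 > 12 \geq tr(K_{4,1,3,3})$, with the upper bound supplied by the isotopic $k_{4,1,3,3}$.

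First I would prove the width lower bounds $w(K_{2,1,3,7}) \geq 206$ and $w(K_{4,1,3,3}) \geq 216$, following the program of Blair and Tomova \cite{blairtom}. The idea is that for sufficiently complicated braids the template cannot be simplified: any competing embedding of strictly smaller width would, after a thin-position sweepout analysis, force either an essential surface or a strand reduction that is incompatible with the complexity of the braid boxes $B_i$. This should show that every thin position carries the thick/thin level data recorded in Proposition \ref{widthtrunk}, pinning the width at the stated value and certifying the lower-width embedding as a thin position.

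For the bridge-number clause of the second statement I would in addition verify that $k'_{4,1,3,3}$ realizes minimal bridge number, i.e. $b(K_{4,1,3,3}) = 14$. The upper bound $b(K_{4,1,3,3}) \leq 14$ is given by the embedding itself; for the matching lower bound one can exploit the satellite structure of the template together with Schubert's additivity of bridge number, or argue directly that no embedding with fewer maxima is compatible with sufficiently complicated braids. By contrast, for $K_{2,1,3,7}$ only the upper bound $b(K_{2,1,3,7}) \leq 12$ from $k'$ is needed, since the thin position already has $b = 13 > 12$; no lower bound on $b(K_{2,1,3,7})$ is required.

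I expect the width lower bound to be the main obstacle, exactly as the authors indicate. The Blair--Tomova techniques have so far certified thin position only up to width $134$, and those of \cite{blairzupan} up to width $78$, whereas here the widths are $206$ and $216$. Pushing the sweepout and graphic arguments through at this scale appears to require either new structural input or a substantial and delicate case analysis, which is why the statement is offered as a conjecture rather than a theorem; the trunk and bridge-number bookkeeping, on the other hand, is already furnished in full by Proposition \ref{widthtrunk}.
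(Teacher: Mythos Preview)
The statement you are addressing is stated in the paper as a \emph{conjecture}, and the paper gives no proof of it. The surrounding discussion explicitly says that verifying $w(K_{2,1,3,7}) = 206$ and $w(K_{4,1,3,3}) = 216$ is ``beyond the limits of the current technology,'' and this is precisely why the statement is labeled a conjecture rather than a theorem. So there is no proof in the paper to compare your proposal against.

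That said, your proposal accurately reconstructs the paper's heuristic case for the conjecture: the inequalities from Proposition~\ref{widthtrunk} do all of the trunk and bridge-number bookkeeping, and the only missing ingredient is the width lower bound certifying that $k_{2,1,3,7}$ and $k'_{4,1,3,3}$ are genuine thin positions. You also correctly isolate the additional step needed for the second clause, namely that $b(K_{4,1,3,3}) = 14$ (not just $\leq 14$), which the paper does not explicitly flag. Your assessment of the obstacle---that Blair--Tomova--type arguments have only been pushed to width $134$ (and $78$ in \cite{blairzupan})---matches the paper's own. In short, your proposal is not a proof, and you acknowledge this; it is a correct outline of what a proof would require, consistent with the paper's discussion.
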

If true, Conjecture 1 holds, where $K = K_{2,1,37}$ and $K' = K_{4,1,3,3}$.

\bibliographystyle{amsplain}
\bibliography{references}

\end{document}